\documentclass[a4paper,fleqn]{elsarticle}
\usepackage{amsfonts,amssymb,amsmath,amsbsy,amsthm,amscd}
\usepackage{lineno,hyperref}
\usepackage[english]{babel}
\usepackage{graphicx}
\usepackage{color}
\usepackage[cp1251]{inputenc}
\usepackage{graphicx}			
\usepackage{wrapfig}			

\usepackage{geometry}			
\usepackage{indentfirst}		
\usepackage{multicol}

\usepackage{graphicx}
\usepackage{color}
\usepackage{array}

\UseRawInputEncoding

\newtheorem{theorem}{Theorem}[section]

\newtheorem{lemma}[theorem]{Lemma}
\newtheorem{corollary}{Corollary}

\renewcommand{\le}{\leqslant}
\renewcommand{\ge}{\geqslant}

\begin{document}

\begin{frontmatter}

\title
{A model of a multiphase medium based on the closure of moment chains for the Vlasov-Poisson equations
}

\author[1]{Olga S. Rozanova} 
\ead{rozanova@mech.math.msu.su} 
\address[1]{
 Lomonosov Moscow
State University, Moscow 119991 Russia}
\begin{abstract}
We propose a method for closing moment chains for the Vlasov-Poisson kinetic system (the Landau fluid model), which yields a hyperbolic system of  equations at each subsequent step. This system is interpreted as a multicomponent medium, where each component has its own density, velocity, and pressure. The coupling between the components is mediated by an electric field. The principle of closing moment chains is that the final component of the fluid is assumed to be pressureless.
 \end{abstract}

\begin{keyword}
 Vlasov-Poisson equations \sep
Landau-fluid model  \sep moments closure\sep multicomponent fluid

\MSC 35F55 \sep 35Q60   \sep    82C40  \sep 35B44  \sep	35C07
\end{keyword}

\end{frontmatter}


\numberwithin{theorem}{section}
\numberwithin{remark}{section}

\section{Introduction}

The transition from the Euler-Poisson kinetic model, which describes the motion of a collisionless plasma, to a hydrodynamic model is one of the most intriguing problems in modeling an electron gas. On the one hand, such a transition seems entirely natural, as it involves the introduction of velocity-averaged quantities with a direct physical meaning -- the moments of the particle velocity and coordinate distribution function. The zeroth, first, and second-order  moments define the density, velocity, and pressure. The resulting equations have the form of conservation laws, but are not closed, meaning they contain a third centered moment, representing heat flux. They are much simpler to calculate than the original model, so it is very tempting to understand how to correctly formulate a hydrodynamic model, that is, express the third moment in terms of all the others while preserving the properties of the original kinetic model.
However, it is generally believed that the hydrodynamic model is incapable of reproducing the properties of the kinetic model, such as Landau damping, turbulence,  or the phenomenon of the loss of smoothness of the solution to the Cauchy problem. Therefore, traditional efforts are focused on selecting a closure that correctly reproduces certain properties of the kinetic model. Recently, methods that perform selection using machine learning and neural networks have been developed \cite{Bois}, \cite{Huang}. A recent review of both traditional and modern methods can be found in \cite{Burles}. It would seem natural to extend the chain of moments, that is, to express not the third moment, but subsequent moments, in terms of the first moments. However, this approach poses a problem associated with the loss of hyperbolicity of the system \cite{Cai}. In other words, the system resulting after closure does not ensure the finite velocity of propagation of disturbances inherent in the kinetic model.

In the present work, the problem of deriving a hyperbolic system of equations from a non-hyperbolic problem is analyzed from a purely mathematical standpoint. However, interest in this problem arises primarily from the requirements of numerical methods, since integrating a hyperbolic system demands significantly less computational power than integrating the full kinetic model. There are known moment-closure techniques that allow for the transition from the Vlasov equation to a hyperbolic system of equations. In the Levermore's method, the moment hierarchy  obtained by taking velocity moments of the one-dimensional kinetic equation under the assumption that the velocity distribution is a maximum-entropy function \cite{Levermore}, \cite{Porteous}. The hyperbolic quadrature method of moments (HyQMOM) is
 leads to a strictly hyperbolic system of equations  \cite{Fox}. In this case, the hyperbolic hierarchy for closures with even numbers of moments \cite{Morin} is build on the representation of the velocity distribution function  as a weighted sum of Dirac deltas.
Let us mention studies that address this problem in one way or another \cite{Cai}, \cite{Issan}, \cite{Koellermeier}, \cite{Burby}, \cite{Huang}.
There are limitations to the use of all these methods.


In this work, we propose an alternative conservative moment closures for the one-dimensional collisionless Vlasov-Poisson equations (the Landau-fluid model).
We   make no claim that this approach can completely solve the problem of transition from a kinetic to a hydrodynamic model. However, in our view, it does have some useful features.

1. It is significantly simpler than known methods.

2. It can be closed at any odd step and has the same properties for each closure: namely, it is hyperbolic (in the non-strict sense) and its coefficient matrix has two distinct eigenvalues, one of which is zero.


3. Closing the moment chain does not mean that all subsequent equations are discarded. On the contrary, subsequent equations become consequences of the previous ones.

The principle of chain closure is based on achieving equality in the H\"older inequality, meaning it is purely mathematical. However, it can be given a physical meaning.
Specifically, the system of moments can be represented as a system of balance equations describing a medium consisting of several naturally occurring phases, the interaction between which occurs through the electric field. These are not the phases that are usually discussed in the context of plasma equations, namely electrons, ions and neutral particles. For each phase we introduce higher-order densities and velocities  and obtain pairs of  equations of mass balance and linear momentum. The first velocity and density are conventional physical ones, while the others are artificially created and are called velocities and densities only because they are included in equations similar to the standard continuity equation and the momentum equation. Each phase is described by a pair of equations. Each phase depends on pressure, except the last. The principle of chain closure is that the last phase is assumed to be pressure-free.

Closure in our model does not mean discarding the entire infinite sequence of equations for moments of higher levels, but only that they turn out to be consequences of the previous ones. Thus, it can be assumed that the multiphase medium created in the described manner consists of an infinite number of phases, but all of them, starting from the one on which the closure was carried out, are pressureless.
\section{Construction of moment equations }
Let $F(t,x,v)\ge 0$ be
the phase space distribution function of
 electrons.
 We consider the kinetic Vlasov-Poisson equations for a collisionless electron plasma in its {\it dimensionless form} in the one-dimensional case
\begin{equation}\label{VP}
F_t +v F_x -  E(t,x) F_v=0, \qquad E_x= 1-n, \qquad n(t,x)=\int\limits_{\mathbb R} F dv,\quad x,v\in \mathbb R, \, t\ge 0,
\end{equation}
where the electric field strength $E$. The issues of existence of a solution to the Cauchy problem for such a system have been well studied i.e. \cite{Iord}, but we will not touch on them here.

Introduce the Hamburger moments:
$$
M_i(t,x)= \int\limits_{\mathbb R} v^i F(t,x,v) dv, \qquad k\in 0\cup \mathbb N,
$$
and assume that moments of any order exist. 
Obviously, $n=M_0$.


From the first equation \eqref{VP} it follows (after multiplying by $v^i$ and integrating over $v$):
\begin{equation}\label{1}
(M_0)_t + (M_{1})_x =0, \qquad (M_i)_t + (M_{i+1})_x = - i E M_{i-1}, i\in \mathbb N,
\end{equation}
(this system was obtained also in \cite{Jin}).
Let us fix $j\in \mathbb N$ and  denote
$\lambda_{j}(t,x)= ( M^2_j/M_{j-1}-M_{j+1})_x,$
then the  equations \eqref{1} for $M_j$ can be rewritten as
\begin{equation*}\label{3}
(M_j)_t + ( M^2_j/M_{j-1})_x = -j E M_{j-1}+ \lambda_j.
\end{equation*}
Since we want to avoid the denominators becoming zero, we will only close the chain {for odd $j=2k-1$}, $k\in\mathbb N$.


From the H\"older inequality it follows that $(M_{2k-1})^2\le M_{2k} M_{2k-2}$,  $k\in \mathbb N $.
Note that in a particular case
$F(t,x,v)=P(t,x)\,\delta(v-v_0)$ the inequality becomes equality.

{\it The principle of cutting off the chain and closing the system} is that at step $j=2k-1$ is the assumption that on this step the H\"older inequality become equality:
\begin{equation}\label{principle}
  (M_{2k-1})^2= M_{2k} M_{2k-2},\quad \mbox{or }  \quad  \lambda_{2k-1}=0.
\end{equation}

Further, if we know $M_i$, $i=0,\dots, n, \quad j=2k-1$, we can define subsequent moments in the same principle, i.e.
\begin{equation}\label{Mn+k}
M_{j+i}=\frac{M^{j+1}_j}{M_{j-1}^i}, \qquad i\in \mathbb N.
\end{equation}
Thus, we can see that the equations for $M_{j+1}$,  $M_{j+2}$, etc, are a consequence of the system for moments up to order $j$.

The closure of the system \eqref{1} at step $j$ has the form
\begin{eqnarray}\label{4}
& M_t + A(M) M_x= B,\\
& M=(M_0,..., M_j)^T, \quad B=(0,-EM_0, -2 EM_1,...,-j E M_{j-1})^T.\nonumber
\end{eqnarray}
\begin{lemma}
The structure of the matrix $A=A(M_0, M_1,..., M_j)$ of dimension $(j+1)\times (j+1)$ is the following. It  has $j$ rows consisting of one at the $i+2$ place in the $i$-th row and zeros at the remaining places, $i=0, 1,...,j-1$,
and the last, $j+1$-th row, consisting of zeros, except for the elements $A_{j \, j-1}= -\frac{M_j^2}{M_{j-1}^2}$ and $A_{j \, j}= \frac{2 M_j}{M_{j-1}}$.

The spectrum of the matrix $A$ is real and consists of the value
  $\frac{M_j}{M_{j-1}}$ of multiplicity 2 and the zero value of multiplicity $j-1$.
\end{lemma}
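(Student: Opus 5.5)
The plan is to read off the structure of $A$ directly from the closed system \eqref{4} and then compute its characteristic polynomial.

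\textbf{Structure of $A$.} For $i=0,\dots,j-1$ the $i$-th equation of \eqref{1} is $(M_i)_t+(M_{i+1})_x=\dots$. So the $i$-th row of $A$ (rows and columns indexed from $0$) has a single $1$ in column $i+1$, i.e.\ at the $(i+2)$-nd place, and zeros elsewhere. For the last equation we use the closure \eqref{principle}, $\lambda_j=0$, which replaces $M_{j+1}$ by $M_j^2/M_{j-1}$. Differentiating,
$$
\bigl(M_j^2/M_{j-1}\bigr)_x=\frac{2M_j}{M_{j-1}}(M_j)_x-\frac{M_j^2}{M_{j-1}^2}(M_{j-1})_x .
$$
This yields exactly the stated entries $A_{j\,j-1}=-M_j^2/M_{j-1}^2$ and $A_{j\,j}=2M_j/M_{j-1}$, with all other entries of the last row zero. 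The right-hand side $B$ is unaffected.

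\textbf{Spectrum.} Put $u=M_j/M_{j-1}$. Then $A$ is a companion-type matrix whose last row is $(0,\dots,0,-u^2,2u)$. I would compute $\det(A-\mu I)$ by expanding along the first column. Only the $(0,0)$ entry $-\mu$ is nonzero there, since the last row has a zero in column $0$ when $j\ge 2$. This reduces the determinant to $-\mu$ times a matrix of the same shape, one size smaller. Iterating $j-1$ times leaves
$$
\det(A-\mu I)=(-\mu)^{j-1}\det\begin{pmatrix}-\mu & 1\\ -u^2 & 2u-\mu\end{pmatrix}=(-\mu)^{j-1}(\mu-u)^2 .
$$
Hence the eigenvalues are $0$ with multiplicity $j-1$ and $u=M_j/M_{j-1}$ with multiplicity $2$, all real. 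Equivalently, one can use the general companion-matrix formula: the characteristic polynomial is $\mu^{j+1}-2u\mu^{j}+u^2\mu^{j-1}$ up to sign.

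\textbf{Care points.} The only step needing care is bookkeeping of the expansion, specifically checking that the last row has no entries in the columns being eliminated. This holds because its only nonzero entries sit in columns $j-1$ and $j$. The case $j=1$ is immediate, since $A$ is then just the $2\times2$ block above. I would also note that $M_{j-1}=M_{2k-2}>0$ whenever $F\not\equiv0$, so $u$ is well defined.
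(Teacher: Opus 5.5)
Your proposal is correct and carries out the ``direct computation'' that the paper gives as its proof. You read off the rows of $A$ from the moment equations with the closure $M_{j+1}=M_j^2/M_{j-1}$, and your cofactor expansion (equivalently, the companion-matrix formula) gives $\det(A-\mu I)=(-\mu)^{j-1}(\mu-u)^2$, which yields the stated spectrum, including the case $j=1$ with no zero eigenvalue.
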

The {\it proof} is a direct computation. Note that \eqref{1} is a non-strictly hyperbolic system, it has no complete set of eigenvectors.
For $j=1$ the spectrum of $A$ has no zeros. $\Box$

\section{Multifluid interpretation}\label{MFI}
\begin{theorem}
For classical solutions system \eqref{4} can be rewritten in conservative form as
\begin{eqnarray}\nonumber
&&(n_1)_t+(n_1 U_1)_x=0,\qquad\qquad\qquad\qquad\qquad\,\,
(n_1 U_1)_t+(n_1 U_1^2 +P_1)_x=- E n_1, \nonumber
\\
&&(n_2)_t+(n_2 U_2)_x=- 2 E n_1 U_1,\qquad\qquad\qquad\,\,\,
(n_2 U_2)_t+(n_2 U_2^2 +P_2)_x=- 3 E n_2, \nonumber
\\
&&\dots \nonumber\\
&&(n_k)_t+(n_k U_k)_x=- (2k-2) E n_{k-1} U_{k-1},\quad\,
(n_k U_k)_t+(n_k U_k^2 +{{P_k}})_x=- (2k-1) E n_{k},\nonumber
\end{eqnarray}
denoted as (MF),
supplemented by
\begin{eqnarray}\label{E}
&&E_t + U_1 E_x= U_1.
\end{eqnarray}
Here
\begin{equation}\label{nUP_MF}
n_i=M_{2i-2},\qquad U_i=\frac{M_{2i-1}}{M_{2i-2}}, \qquad P_i=n_{i+1}-n_{i} U_{i}^2, \quad i=1,\dots, k, \quad k \in \mathbb R.
\end{equation}
The closure principle \eqref{principle} corresponds to $P_k=0$ on the step $k$.

The assumption $P_i=0$, $i>k$ makes an infinite system of pairs of equations describing fluids with the number $i$, a consequence of the closure of the system at step $k$.

\end{theorem}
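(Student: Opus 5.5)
The plan is to substitute the definitions \eqref{nUP_MF} directly into the moment system \eqref{1} and check that each pair of equations in (MF) is just a pair of consecutive moment equations written in new variables. Fix $i\in\{1,\dots,k\}$ and consider the equations of \eqref{1} for $M_{2i-2}$ and $M_{2i-1}$.

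\emph{Continuity equation of phase $i$.} Since $n_i=M_{2i-2}$ and $n_iU_i=M_{2i-1}$, the equation for $M_{2i-2}$ becomes
\[
(n_i)_t+(n_iU_i)_x=-(2i-2)EM_{2i-3}.
\]
For $i\ge2$ we have $M_{2i-3}=n_{i-1}U_{i-1}$, and for $i=1$ the right-hand side vanishes, matching the first equation of \eqref{1}.

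\emph{Momentum equation of phase $i$.} The equation for $M_{2i-1}$ reads $(M_{2i-1})_t+(M_{2i})_x=-(2i-1)EM_{2i-2}$. Here $M_{2i}=n_{i+1}$, and by definition $n_{i+1}=n_iU_i^2+P_i$. This gives
\[
(n_iU_i)_t+(n_iU_i^2+P_i)_x=-(2i-1)En_i .
\]

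\emph{Closure step $i=k$.} Here $n_{k+1}=M_{2k}$ is not an independent unknown; it is defined by the closure \eqref{principle} as $M_{2k}=M_{2k-1}^2/M_{2k-2}=n_kU_k^2$. Hence $P_k=0$, and conversely $P_k=0$ is exactly $\lambda_{2k-1}=0$. This identifies the last row of $A$ in the Lemma with the pressureless momentum equation. All steps so far are algebraic identities, valid for classical solutions wherever $M_{2i-2}>0$; positivity holds since $F\ge0$ and the moments are even.

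\emph{Field equation \eqref{E}.} It is not part of \eqref{1}; it comes from the Poisson equation. From $E_x=1-n_1$ and the continuity equation, $(E_t)_x=-(n_1)_t=(n_1U_1)_x$. Integrating in $x$ gives $E_t=n_1U_1+c(t)$. Substituting $n_1=1-E_x$ yields $E_t+U_1E_x=U_1+c(t)$. I would fix $c\equiv0$ by the standard normalization: zero total current, or decay at infinity with $E$ and $n_1U_1$ vanishing at $x=-\infty$. This is the one point requiring an assumption rather than pure computation, and I would state it explicitly.

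\emph{Infinite tail of consequences.} This is the main conceptual step. For $i>k$, define the higher moments by \eqref{Mn+k}. In the variables \eqref{nUP_MF} this gives $U_i=U_k$ and $n_{i+1}=n_iU_k^2$, so $P_i=0$ for all $i\ge k$. I must show that the pair of equations for phase $i>k$ follows from the system for phases $1,\dots,k$. The approach is induction on $i$. Set $u=U_k$. The pressureless phase-$k$ pair yields the Burgers-type equation
\[
u_t+uu_x=-(2k-1)E+(2k-2)E\,\frac{n_{k-1}U_{k-1}}{n_k}u ,
\]
obtained by subtracting $u$ times the continuity equation from the momentum equation.

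Next, write $n_{k+1}=n_ku^2$ and verify directly that
\[
(n_ku^2)_t+(n_ku^3)_x=-2kEn_ku
\]
using the phase-$k$ equations. The delicate part is checking that the source terms agree: the right-hand side is $-2kEM_{2k-1}$, and it must be reconciled with the contributions coming from the $k$-th sources. If the sources do not combine exactly, I would reinterpret the claim as saying that the higher moment equations are \emph{defined} consistently through \eqref{Mn+k}, and check consistency up to the closure order. I expect this source-term bookkeeping to be the main obstacle. Once it is handled, the momentum equation for phase $k+1$ follows analogously, and induction completes the proof.
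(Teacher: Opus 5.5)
Your derivation of (MF), the identification $P_k=0\Leftrightarrow\lambda_{2k-1}=0$, and the derivation of \eqref{E} from the Poisson and continuity equations (constant removed by decay at infinity) coincide with the paper's proof. The gap is the step you deferred in the infinite tail. The identity $(n_ku^2)_t+(n_ku^3)_x=-2kEn_ku$ is false for $k\ge2$, so the induction cannot even start. Carry out the computation using your Burgers-type equation for $u=U_k$ and the phase-$k$ continuity equation:
\[
(n_ku^2)_t+(n_ku^3)_x+2kEn_ku=2(k-1)E\,u\,\bigl(n_{k-1}U_{k-1}u-n_k\bigr)=\frac{2(k-1)E\,U_k}{M_{2k-2}}\bigl(M_{2k-3}M_{2k-1}-M_{2k-2}^2\bigr).
\]
This is a derivative-free expression that is generically nonzero. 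For example, take $k=2$ and a point where $E\neq0$ and $(M_0,M_1,M_2,M_3)=(2,3,5,9)$, the moments of $\delta(v-1)+\delta(v-2)$. There $M_1M_3-M_2^2=2$. The culprit is exactly the source $-(2k-2)En_{k-1}U_{k-1}$ in the phase-$k$ continuity equation. For $k=1$ this source is absent, so $u_t+uu_x=-E$ and $(n_1u^m)_t+(n_1u^{m+1})_x=-mEn_1u^{m-1}$ for every $m$. Only the cold-plasma closure has the property you were trying to prove; for $k\ge2$ no amount of source-term bookkeeping will make the sources combine.

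What is actually provable is the algebraic statement that $P_i=0$ for $i\ge k$ is equivalent to \eqref{Mn+k}, i.e.\ $U_i=U_k$ and $n_{i+1}=n_iU_k^2$; in other words, the higher phases are slaved to phase $k$. This is also all that the paper's proof asserts. The paper does not verify that the higher balance laws of \eqref{1} then hold, and by the computation above they do not for $k\ge2$. The remark after \eqref{Mn+k} that they are consequences is correct only for $k=1$. So your fallback is not an optional reinterpretation but the only correct content of the last assertion. Present it as such, drop the induction, and note explicitly that for $k\ge2$ the higher moment equations are in general incompatible with the closure.
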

\proof System (MF) can be obtained from \eqref{1} by a direct computation. The usual density and velocity  are $n_1=n$ and $U_1=U$. For smooth solutions equation \eqref{E} follows from the second equation of \eqref{VP}  and the first equation of (MF), the continuity equation $(n_1)_t+(n_1 U_1)_x=0$, assuming that $E$ is zero at infinity.
Further, assumption $P_i=0$, $i>k$ is equivalent to \eqref{Mn+k}.
$\Box$

Note that if
$
F(t,x,v)=P(t,x)\, \delta(v-v_0(t,x)),
$
then
$U_k=U_{1}=v_0,k\in \mathbb N, $
otherwise it is not so.

\begin{corollary}
System \eqref{1}, \eqref{E} has  infinite series of conservation laws:
\begin{eqnarray}\label{CL}
&&\left(M_i+\frac{i}{2} M_{i-1} E^2\right)_t+\left(M_{i+1} +\frac{i}{2} M_{i} E^2 U_i -i M_{i-1} E\right)_x=0,\quad i\ge 1.
\end{eqnarray}
In turn,
system (MF), \eqref{E} has two infinite series of conservation laws:
\begin{eqnarray*}\nonumber
&&\left(n_k+(k-1) n_{k-1} E^2\right)_t+\left(n_k U_k+(k-1) n_{k-1} U_{k-1} E^2- 2(k-1) n_{k-1} E\right)_x=0,\\
&&\left(n_k U_k+\frac{2k-1}{2} n_{k} E^2\right)_t+\left(n_{k+1} +\frac{2k-1}{2} n_{k} E^2 U_k -(2k-1) n_k E\right)_x=0,
\end{eqnarray*}
$k\in\mathbb N$, $k\ge 2.$

\end{corollary}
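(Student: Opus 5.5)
The plan is a direct verification: differentiate the density of each conservation law in $t$, eliminate every time derivative using the moment equations \eqref{1} and the field equations, and check that what remains is minus the $x$-derivative of the stated flux. Two facts about $E$ are needed. First, from the Poisson equation in \eqref{VP}, $E_x=1-M_0$. Second, combining \eqref{E} with $E_x=1-n_1$ gives $E_t=U_1(1-E_x)=U_1 n_1=M_1$, which is the Ampère form of the field equation. With these, every derivative of $E$ becomes an algebraic expression in the moments, and no derivative of $E$ survives.

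For \eqref{CL} I would fix $i\ge1$ and expand
\[
\Bigl(\tfrac{i}{2}M_{i-1}E^2\Bigr)_t=\tfrac{i}{2}E^2 (M_{i-1})_t+iM_{i-1}EM_1 .
\]
Then I substitute $(M_{i-1})_t=-(M_i)_x-(i-1)EM_{i-2}$ and $(M_i)_t=-(M_{i+1})_x-iEM_{i-1}$. On the flux side I expand
\[
\bigl(-iM_{i-1}E\bigr)_x=-iE(M_{i-1})_x-iM_{i-1}(1-M_0),
\]
together with the $x$-derivative of the $E^2$-term. The term $-iEM_{i-1}$ coming from $(M_i)_t$ must be absorbed by the $E$-linear part of the flux derivative. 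The $E^2$-terms must be matched by the flux term $\tfrac{i}{2}M_iE^2U_i$; here I read $M_iE^2U_i$ as the natural multiphase flux, i.e. with $U_i$ understood through \eqref{nUP_MF} so that the product reduces to a moment times $E^2$. For the (MF) versions I would not redo the computation. Instead I specialize \eqref{CL} to $i=2k-2$ and to $i=2k-1$ and rewrite it through \eqref{nUP_MF}: $M_{2k-2}=n_k$, $M_{2k-1}=n_kU_k$, $M_{2k}=n_{k+1}$, $M_{2k-3}=n_{k-1}U_{k-1}$. This produces exactly the two displayed series, with coefficients $(k-1)=\tfrac{2k-2}{2}$ and $\tfrac{2k-1}{2}$. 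The restriction $k\ge2$ is needed so that $n_{k-1}$ is defined in the first series.

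The main obstacle is the bookkeeping of the leftover nonlinear terms. These are the cubic terms $-\tfrac{i(i-1)}{2}E^3M_{i-2}$ and the quadratic-in-moments terms $iM_{i-1}EM_1$ versus $iM_{i-1}M_0$. They cancel only if the flux coefficient multiplying $E^2$ is interpreted correctly. I would first try to fix this interpretation by requiring cancellation: match coefficients of $E^3$, $E^2$, $E$ and $E^0$ separately, and read off which combination of $M_{i-1},M_i,U_1,U_i$ must stand in the flux. If the naive identity fails, I would use the pressureless relations \eqref{Mn+k}, which make $M_i/M_{i-1}$ a single velocity beyond the closure step, to collapse these terms. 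I would also check the case $i=1$ by hand: it should reproduce the classical momentum–energy balance $(M_1+\tfrac12M_0E^2)_t+\dots=0$. This case serves as a consistency test of the chosen interpretation before the general $i$ is treated.
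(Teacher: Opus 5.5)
\textbf{There is a genuine gap, and it cannot be closed.} Your plan is the same direct computation the paper appeals to: eliminate $t$-derivatives through \eqref{1}, use $E_t=M_1$ and $E_x=1-M_0$, and check that the residual vanishes. But the proposal stops exactly at the decisive cancellation, and that cancellation does not occur. This is not a bookkeeping issue that a better reading of $U_i$ can fix.

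Let $R$ be the left-hand side of \eqref{CL} after these substitutions, and evaluate it at a point where $E=0$. Three contributions vanish there, whatever moment-valued meaning you give to $U_i$: the source $-iEM_{i-1}$, the time derivative of $\tfrac{i}{2}M_{i-1}E^2$, and the $x$-derivative of the flux term $\tfrac{i}{2}M_iE^2U_i$. Each carries a factor $E$. So at such a point
\[
R=-iM_{i-1}E_x=-iM_{i-1}(1-M_0),
\]
which is nonzero for generic data. For example, take $E_0=\varepsilon xe^{-x^2}$ and $n_0=1-E_0'$, evaluate at $x=0$, and arrange $M_{i-1}\neq0$ there.

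For $i\ge2$ there is a further obstruction. The term $-\tfrac{i(i-1)}{2}E^3M_{i-2}$ arises from $(M_{i-1})_t$, and nothing else in $R$ is cubic in $E$.

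Consequences for your proposed repairs:
\begin{itemize}
\item Coefficient matching cannot produce a working flux. Choosing the flux so as to force cancellation would change the statement, not prove it.
\item The fallback to \eqref{Mn+k} is not available for the unclosed system \eqref{1}, \eqref{E}. It would not help anyway: the residual at zeros of $E$ survives even for monokinetic data, where all those relations hold.
\item Your $i=1$ test fails for a structural reason. Because of the neutralizing background, $(M_1)_t+(M_2-\tfrac12E^2)_x=-E$, so there is no momentum conservation law to recover.
\end{itemize}

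The translation to (MF) is also wrong for the first series. At $i=2k-2$ one has $M_{i-1}=M_{2k-3}=n_{k-1}U_{k-1}$, so \eqref{CL} would give the density $n_k+(k-1)n_{k-1}U_{k-1}E^2$, not $n_k+(k-1)n_{k-1}E^2$. The displayed first series is built on $M_{2k-4}$ and is not a specialization of \eqref{CL}.

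For comparison, the paper's argument is a one-line computation using $(E^2)_tM_{i-1}+(E^2)_xM_i=2EM_i$.
\begin{itemize}
\item This identity follows from \eqref{E} and continuity only for $i=1$; for $i\ge2$ it requires $M_i/M_{i-1}=U_1$.
\item Even granting it, the residual $iE(M_i-M_{i-1})-i(EM_{i-1})_x-\tfrac{i(i-1)}{2}E^3M_{i-2}$ remains.
\item What this computation genuinely yields is $(M_0E^2)_t+(M_1E^2)_x=2EM_1$. That gives the single energy law $(M_2+M_0E^2)_t+(M_3+M_1E^2)_x=0$, equivalently $(M_2+E^2)_t+(M_3)_x=0$.
\end{itemize}
So neither your plan nor the paper's hint establishes the infinite series as printed; a proof would have to start from a corrected statement.
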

The {\it proof} is a computation. To obtain \eqref{CL} we use $(E^2)_t M_{i-1} + (E^2)_x M_i= 2 E M_i$. $\Box$

\medskip

 \subsection{ Examples of two first closures}
{{\bf 1.} $k=1$, $j=2k-1=1$. } Here \begin{eqnarray*}
A=\begin{pmatrix}
0 & 1 \\ -\frac{M_3^2}{M_{2}^2} &\frac{2 M_4}{M_{2}}
\end{pmatrix}.
\end{eqnarray*}
The system in the terms of moments is
 \begin{eqnarray*}\label{6}
&& E_t + \frac{M_1}{M_0} E_x=  \frac{M_1}{M_0}, \quad
  (M_0)_t+(M_1 )_x=0,\quad
 (M_1)_t+\left(\frac{M_1^2}{M_0}\right)_x=- E M_0,
  \end{eqnarray*}
in hydrodynamic terms it is
\begin{eqnarray*}\label{61}
&& E_t + U_1 E_x= U_1, \quad
  (n_1)_t+(n_1 U_1)_x=0,\quad
  (n_1 U_1)_t+(n_1 U_1^2)_x=- E n_1.
  \end{eqnarray*}
 which on smooth solutions is equivalent to the hydrodynamic equations of cold plasma \cite{Ch_book}
  \begin{eqnarray}\label{7}
 && E_t + U_1 E_x= U_1, \qquad
 (U_1)_t+U_1 (U_1)_x=- E.
  \end{eqnarray}

{ {\bf 2.} $k=2$, $j=2 k-1=3$}. Here \begin{eqnarray*}
A=\begin{pmatrix}
0 & 1 & 0 & 0\\
0 & 0 & 1& 0\\
0 & 0 & 0& 1\\
0 & 0&  -\frac{M_3^2}{M_{2}^2} &\frac{2 M_4}{M_{2}}
\end{pmatrix},
\end{eqnarray*}
System in the terms of moments is
   \begin{eqnarray}\label{k2M_1}
 &&E_t + \frac{M_1}{M_0} E_x= \frac{M_1}{M_0},\qquad
  (M_0)_t+(M_1)_x=0,\qquad
  (M_1)_t+(M_2)_x=- E M_0,\\
 &&  (M_2)_t+(M_3)_x=- 2 E M_1,\qquad
  (M_3)_t+\left(\frac{M_3^2}{M_2}\right)_x=- 3 E M_2.\label{k2M_2}
  \end{eqnarray}
and in hydrodynamic terms is
\begin{eqnarray}\label{k2MF1}
 && E_t + U_1 E_x= U_1, \quad
  (n_1)_t+(n_1 U_1)_x=0,\quad
  (n_1 U_1)_t+(n_2)_x=- E n_1,\\
&&  (n_2)_t+(n_2 U_2)_x=- 2 E n_1 U_1,\qquad
  (n_2 U_2)_t+(n_2 U_2^2)_x=- 3 E n_2.\label{k2MF2}
  \end{eqnarray}

\section{Expression for heat flux}

In the traditional method of closing the Vlasov-Poisson equations they use the  moments up to the third order, e.g. \cite{Huang}. Namely, through  $M_0$ and $M_1$ the velocity $U=U_1=\frac{M_1}{M_0}$ is  determined, and then the centered moments of the second and third order are considered.
The centered moment of the second order $p(t,x)=\int\limits_{\mathbb R}(v-U)^2 F dv$ corresponds to the pressure, it is easy to see that   $p=P_1$ according to \eqref{nUP_MF}. The third centered moment is the heat flux $q(t,x)=\int\limits_{\mathbb R}(v-U)^3 F dv$. The resulting system consists of three equations \eqref{k2MF1} and the additional equation
\begin{eqnarray}\label{q}
 && p_t + U p_x+ 3 p U_x= -q_x,
   \end{eqnarray}
\eqref{k2MF1}, \eqref{q} is not closed and the problem is how to express $q$ through $n=M_0, U$ and $p$.
It is can be readily computed that under our approach
\begin{eqnarray*}
 && q= M_3+\frac{3 M_1 M_2}{M_0} - \frac{4 M_1^3 }{M_0^2} = n_2 (3 U_1+U_2) - 4 U_1^3 n_1.
   \end{eqnarray*}
Thus the heat flux can be expressed through the solution of system \eqref{k2M_1}, \eqref{k2M_2} or, alternatively, system \eqref{k2MF1}, \eqref{k2MF2}. Moreover, one can obtain $M_0,\dots, M_3$ from the closure higher than $k=2$  and use only first components of  $M_0, M_1,\dots, M_j$.

\section{Closure based on the assumption about the distribution function $F(t,x,v)$.}
A priori assumptions about the properties of the distribution function sometimes help close the moment chain \cite{Kuehn}.

{\bf 1.} As noted above, if $
F(t,x,v)=P(t,x) \,\delta(v-v_0(t,x))
$, then $P_1=0$ and the moment equations \eqref{1}  are closed at the first step and system (MF) reduces to the cold plasma equations \eqref{7}.

{\bf 2.} System \eqref{1} was written out in \cite{Jin}, \cite{Jin1},  where it was used to construct a numerical method for multivalued solutions, but its closure was made under the assumption that
$
F(t,x,v)=\sum\limits_{i=1}^j P_i(t,x)\delta(v-v_i(t,x)),
$
where $(P_i, v_i)$ are the density and velocity of the $i$th branch. Note that the  sense of $(P_i, v_i)$ is different from $(n_i,U_i)$ introduced in Sec.\ref{MFI}.

\label{principle}

{\bf 3.}
 If
$
F(t,x,v)=P(t,x)\, {\mathcal K}(v-v_0(t,x)),
$
where ${\mathcal K}(v-v_0)$ is the heat kernel, the density of the Gaussian distribution (or any other symmetric distribution), then
all moments can be expressed through expectation $\mu$ and dispersion $\sigma^2$ as
$$M_1=\mu, \, M_2=\mu^2+\sigma^2,\, M_3=\mu^3+3 \mu \sigma^2,\, M_4=\mu^4+6 \mu^2 \sigma^2+3\sigma^4,\, \dots.$$
The result of the closure is (in variables $n=n_1$, $U=U_1$, $p=P_1$)
\begin{eqnarray*}
&&E_t + U E_x= U,\quad
n_t+(n U)_x=0,\quad
(n U)_t+(n U^2 +P)_x=- E n, \quad
p_t+U p_x + 3 p  U_x=0,
\end{eqnarray*}
which implies $p= e^S n^3,$ \,  $\frac{dS}{dt}=0.$ This system corresponds to
the isentropic gas dynamics of one-atomic gas, $q=0$.

{\bf 4.} There are many other distributions whose moments are completely determined by the first few moments. One such example is the Pearson distribution, which contains numerous special cases. In general, the Pearson distribution is determined by the first four moments. In each of these cases, it is possible to obtain a closure of the moment chain.

\section{Discussion}
\subsection{The moment problem.} If all moments are known, we can {\it formally} reconstruct $F$ as follows:
\begin{equation*}\label{FM}
F(t,x,v)=\sum\limits_{i=0}^\infty \, \frac{(-1)^i}{i!} \,\delta^{(i)}(v) \, M_i(t,x).
\end{equation*}
However, for the sequence of moments to uniquely determine the distribution function, it must satisfy a specific condition \cite{Lin}. One sufficient condition for uniqueness is
$
M_{2i}= O (2i)^{2i}, \, i\to \infty
$,
which implies, in particular, that the even moments grow rapidly. The moments of a chain truncated by setting terms to zero starting from a certain step (a common practice in physical problems \cite{Kuehn}) do not satisfy this condition. Although this condition is merely sufficient, there are known examples demonstrating that the distribution cannot be uniquely reconstructed from such a finite set of moments. A natural question arises: does the closure principle \eqref{principle} contradict the sufficient conditions for the unique determination of the distribution function from its moments? It turns out that this principle does not guarantee uniqueness either. Indeed, one can show that the assumption \eqref{principle} implies that one possible form of the distribution function for closure at step $k$ is $F(t,x,v)=\sum\limits_{i=1}^k n_i(t,x)\delta(v-U_i(t,x)).$ For such a distribution, $M_{2i}=\sum\limits_{j=1}^k n_j U_j^{2i}\sim U_{max}^{2i} $, where $U_{max}=\max\limits_{j} |U_j|$.
Thus, the sufficient condition is not met. Examples confirm the existence of an infinite number of continuous distributions possessing the same moments as the discrete distribution $F(t,x,v)$ mentioned above \cite{Akhiezer}.

\subsection{The problem of the smoothness of the solution to the Cauchy problem}

 Iordanskii \cite{Iord} considered the Cauchy problem for system \eqref{VP} and showed that under some fairly general assumptions about the electron density
 $F(t,x,v)$ the solution preserves global smoothness for all initial data. For the hydrodynamic model of cold plasma \eqref{7}, the first step of the closure, on the contrary, there is a wide class of initial data for which the derivatives of solution go to infinity  in a finite time, see \cite{RCh} for the criterion of the singularities formation.
It would be natural to expect that with closure at each subsequent step the requirements on the initial data that ensure the global smoothness of the solution are weakened.

However, preliminary numerical results indicate that shifting the closure to the next step does not automatically extend the system's lifespan; the initial data must be specially adapted for this purpose. How to properly carry out this adaptation remains an open question. In effect, applying the closure at each subsequent step should impart certain "parabolic" characteristics to the hyperbolic system.

In this context, it is worth noting that a system closure method discovered via machine learning, which reproduces Landau damping, results in a following  degenerate parabolic system
\begin{eqnarray*}
&E_t + U E_x= U,\qquad
&n_t+(n U)_x=0,\\
&(n U)_t+(n U^2 +nT)_x=- E n, \qquad
&T_t+U T_x + 2 T U_x= q (\sqrt{T} T_x)_x,
\end{eqnarray*}
$q=4.2665$ according to one study and $q=4.1231$ according to another, independent one \cite{Burles}, \cite{HuangZ}.
Here nonlinear diffusion applies to only one variable, temperature $T=np$.

\subsection{Generalization to the three-dimensional case}

In principle, the multicomponent medium model can be extended to the three-dimensional case.  The main obstacle is the loss of a strict ordering in the hierarchy of moments, which is a direct consequence of the tensor nature of moments in three-dimensional space. In the absence of canonical ordering, there is no canonical closure either; consequently, the method loses the very feature that made it attractive in the one-dimensional case: a unified, mathematically natural truncation principle.



\end{document}